\newtheorem{theorem}{Theorem}[section]
\newtheorem{lemma}[theorem]{Lemma}
\newtheorem{proposition}[theorem]{Proposition}
\newenvironment{proof}[1][Proof]{\noindent\textbf{#1.} }{\ \rule{0.5em}{0.5em}}
\begin{document}
\title{Homogeneous $(\alpha,\beta)$-spaces with positive flag curvature and vanishing S-curvature \thanks{Project supported by NSFC (no. 11271216, 11271198, 11221091),State Scholarship Fund of CSC (no. 201408120020), Doctor fund of Tianjin Normal University (no. 52XB1305) and SRFDP of China}}
\author{Ming Xu$^1$ and Shaoqiang Deng$^2$ \thanks{Corresponding author. E-mail: dengsq@nankai.edu.cn}\\
\\
$^1$College of Mathematics\\
Tianjin Normal University\\
 Tianjin 300387, P.R. China\\
 \\
$^2$School of Mathematical Sciences and LPMC\\
Nankai University\\
Tianjin 300071, P.R. China}
\date{}
\maketitle
\begin{abstract}
We give a complete classification of homogeneous $(\alpha,\beta)$-metrics  with positive flag curvature and vanishing S-curvature.

\textbf{Mathematics Subject Classification (2000)}: 22E46, 53C30.

\textbf{Key words}: $(\alpha,\beta)$-metrics, flag curvature, S-curvature.

\end{abstract}
\section{Introduction}
In this paper we give a complete classification of homogeneous $(\alpha,\beta)$-metrics with positive flag curvature and vanishing S-curvature. One of the central and most difficult problems
in Riemannian geometry is the classification of smooth manifolds which admit Riemannian metrics with positive sectional curvature. In the homogeneous case, Berger and Wilking \cite{Ber, WI} obtained a classification of
normal homogeneous Riemannian manifolds with positive curvature. Wallach \cite{Wa} gave a classification  of
even-dimensional homogeneous Riemannian manifolds with positive curvature. Then Aloff-Wallach \cite{AloffWallach1975} constructed an infinite  series of  $7$-dimensional homogeneous Riemannian manifolds with positive curvature which has distinct homotopy type. In 1976,  Brad Bergery \cite{BB} made an argument to show that the lists of Berger-Wilking, Wallach and Aloff-Wallach exhaust all the homogeneous  Riemannian manifolds with positive sectional curvature. This gives a complete classification of positively curved homogeneous Riemannian manifolds. Note that recently the first-named author and Wolf \cite{XW} found that there is a  substantial gap in Bergery's argument. This error has been corrected. Therefore, the classification of positively curved homogeneous Riemannian manifolds  is complete.

  In Finsler geometry,  the study of Finsler spaces with positive flag curvature possesses the same importance as in Riemannian geometry. Therefore, it is a key problem to classify homogeneous Finsler spaces with positive flag curvature. In some early considerations, the authors of this paper and collaborators have obtained a classification of even-dimensional homogeneous Finsler spaces, as well as that of homogeneous Randers spaces with
  positive flag curvature and vanishing S-curvature. However, as stated before, the  classification of the odd-dimensional case is much harder and up to now we have not found an efficient technique to solve this problem. The main result of this paper can be viewed as a step to the final settlement of this problem.

  The main theorem of this paper can be stated as the following:

\begin{theorem}\label{main1}
Let $G$ be a connected simply connected Lie group and $H$ be a closed subgroup of $G$. Suppose on the coset space $G/H$ there exists a $G$-invariant non-Riemannian $(\alpha,\beta)$-metric with positive flag curvature and vanishing S-curvature. Then the coset space $G/H$ must be equivalent to (in the sense that the corresponding Lie algebra pair $(\mathfrak{g},\mathfrak{h})$ is the same) is  one of the following:
\begin{description}
\item{\rm (1)} The homogeneous spheres $S^{2n+1}=\mathrm{SU}(n+1)/\mathrm{SU}(n)$, $S^{2n+1}
    =\mathrm{U}(n+1)/\mathrm{U}(n)$, $S^{4n+3}=\mathrm{Sp}(n+1)/\mathrm{Sp}(n)$, $S^{4n+3}=(\mathrm{Sp}(n+1)\mathrm{U}(1))/(\mathrm{Sp}(n)\times\mathrm{U}(1))$;
\item{\rm (2)} The Aloff-Wallach spaces $S_{k,l}$ with $kl(k+l)\neq 0$;
\item{\rm (3)} The coset space $\mathrm{U}(3)/{T^2}$, where $T^2$ is a two dimensional torus in the standard maximal torus of $\mathrm{U}(3)$ consisting of diagonal matrices, and it is not contained in the standard subgroup $\mathrm{SU}(3)$.
\end{description}
Moreover, on any of above coset spaces there exists an invariant non-Riemannian $(\alpha,\beta)$-metric with positive flag curvature and vanishing S-curvature.
\end{theorem}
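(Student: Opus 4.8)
The plan is to reduce the problem to the Riemannian classification recalled in the introduction and then to run a case-by-case analysis over the resulting finite list. First I would unwind the invariance hypotheses. Fixing a reductive decomposition $\mathfrak{g}=\mathfrak{h}\oplus\mathfrak{m}$, a $G$-invariant $(\alpha,\beta)$-metric is the same datum as an $\mathrm{Ad}(H)$-invariant inner product on $\mathfrak{m}$ (giving $\alpha$), an $\mathrm{Ad}(H)$-fixed vector $v\in\mathfrak{m}$ (giving $\beta$ via $\alpha$), and an admissible function $\phi$. The metric is non-Riemannian exactly when $v\neq 0$, so the very first requirement is that the isotropy representation of $H$ on $\mathfrak{m}$ contain a nonzero trivial summand. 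This single condition already eliminates every even-dimensional positively curved homogeneous space: for each of $S^{2n}$, $\mathbb{CP}^n$, $\mathbb{HP}^n$, $\mathrm{OP}^2$ and the three Wallach flag manifolds the isotropy representation has no nonzero fixed vector.

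Next I would bring in the two curvature hypotheses. By the description of the S-curvature of homogeneous $(\alpha,\beta)$-spaces established earlier, vanishing S-curvature forces $\beta$ to be a Killing form (its length is automatically constant by homogeneity); concretely, $\mathrm{ad}(v)|_{\mathfrak{m}}$ must be skew-symmetric with respect to the inner product defining $\alpha$. Substituting this into the flag-curvature formula for $(\alpha,\beta)$-metrics with Killing $\beta$, the hypothesis that $F$ has positive flag curvature forces $G/H$ to carry an invariant Riemannian metric of positive sectional curvature, so that the pair $(\mathfrak{g},\mathfrak{h})$ occurs in the corrected Berger--Wilking--Wallach--Bergery classification. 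Enumerating the transitive presentations of the spaces on that list --- which is where the pairs with $\mathfrak{g}=\mathfrak{u}(3)$ appear alongside those with $\mathfrak{g}=\mathfrak{su}(3)$ --- confines $(\mathfrak{g},\mathfrak{h})$ to a short explicit collection.

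It then remains to test each surviving pair against the three constraints: existence of a nonzero $\mathrm{Ad}(H)$-fixed $v$, skew-symmetry of $\mathrm{ad}(v)|_{\mathfrak{m}}$, and positivity of the flag curvature of the resulting $F$. Among the odd-dimensional candidates the homogeneous spheres and the Aloff--Wallach spaces survive, whereas the Berger spaces $\mathrm{SO}(5)/\mathrm{SO}(3)$ and $\mathrm{SU}(5)/(\mathrm{Sp}(2)\cdot\mathrm{U}(1))$ are discarded because their isotropy representations have no nonzero fixed vector. The coset $\mathrm{U}(3)/T^2$ survives precisely when $T^2\not\subset\mathrm{SU}(3)$, for otherwise the central circle splits off and destroys positive curvature; and the Aloff--Wallach parameters must satisfy $kl(k+l)\neq 0$, which is exactly Aloff and Wallach's condition for positive curvature. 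For the converse (the ``Moreover'' clause) I would, on each space in the final list, produce the data explicitly: take $\alpha$ a positively curved invariant metric, $v$ a nonzero $\mathrm{Ad}(H)$-fixed vector with $\mathrm{ad}(v)|_{\mathfrak{m}}$ skew, $\beta$ a small multiple of its $\alpha$-dual, and $\phi$ a standard admissible function; since $F$ is then a small perturbation of a positively curved Riemannian metric, positivity of the flag curvature persists for $\|\beta\|_{\alpha}$ small.

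The hard part will be the flag-curvature estimates inside this case-by-case analysis. Unlike the Riemannian sectional curvature, the flag curvature of an $(\alpha,\beta)$-metric depends on both the flagpole and the transverse edge through a considerably more intricate expression, and positivity must be certified uniformly over the entire (compact) flag space for each candidate. Controlling this expression --- both to obtain the reduction ``positive flag curvature $\Rightarrow$ positively curved base'' and to verify positivity in the explicit constructions --- is the principal technical obstacle, and it is precisely what makes the odd-dimensional non-Randers situation genuinely harder than the Randers and even-dimensional cases settled in earlier work.
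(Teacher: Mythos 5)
Your outline hinges on a single unproven claim: that for a homogeneous $(\alpha,\beta)$-metric with Killing $\beta$, positive flag curvature of $F$ forces $G/H$ itself to carry an invariant Riemannian metric of positive sectional curvature, so that the odd-dimensional Riemannian classification of Berger--Aloff--Wallach--B\'erard Bergery can be invoked directly. You justify this by ``substituting into the flag-curvature formula for $(\alpha,\beta)$-metrics with Killing $\beta$,'' but no such formula exists in the generality you need: the navigation-type identity relating the flag curvature of $F$ to the sectional curvature of an underlying Riemannian metric is special to Randers metrics (it is exactly what \cite{HD} exploits in the Randers case), while for general $(\alpha,\beta)$-metrics the only available comparison tool is Shen's localization theorem (Theorem \ref{shen-thm}), which identifies $R_y^F$ with $R_y^{g_V}$ only when the flagpole $y$ points along the geodesic field $V$. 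Hence positivity of the flag curvature of $F$ transfers to the Riemannian metric $g_V$ only for flags containing $v$, which is far from positivity of all sectional curvatures of any invariant Riemannian metric. The case where this matters is $\mathrm{SU}(3)/\mathrm{U}(1)$ with $\mathrm{U}(1)=\{\mathrm{diag}(e^{\sqrt{-1}t},e^{-\sqrt{-1}t},1)\}$, i.e. $S_{k,l}$ with $kl(k+l)=0$: there a nonzero $\mathrm{Ad}(H)$-fixed $v$ exists with $\mathrm{ad}(v)|_{\mathfrak{m}}$ skew with respect to the normal metric, so every algebraic constraint in your scheme is satisfied, yet the space admits no invariant positively curved Riemannian metric and therefore must be excluded by proving directly that no invariant $(\alpha,\beta)$-metric with Killing $\beta$ on it is positively curved. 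Your argument excludes it only by citing the Riemannian list, i.e. by assuming the very reduction in question; nothing in your outline rules out positive flag curvature on this space.

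The paper's route is designed precisely to avoid this. It first uses the rank inequality from \cite{XDHH} (valid under positive flag curvature) to show that $\mathfrak{k}=\mathfrak{h}\oplus\mathbb{R}v$ is the Lie algebra of a closed subgroup $K$, and then proves that the natural projection $\pi:(G/H,F)\rightarrow(G/K,cg)$ is a Finslerian submersion onto a \emph{Riemannian} quotient (here the $(\alpha,\beta)$-type of $F$ is used to see that the induced norm on $\mathfrak{p}=v^{\perp}$ is Euclidean). Since submersions do not decrease curvature, the \emph{even-dimensional} space $(G/K,cg)$ is a positively curved Riemannian homogeneous space, so Wallach's even-dimensional classification applies; enumerating the subalgebras $\mathfrak{h}\subset\mathfrak{k}$ of codimension one then produces the candidate list. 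The residual bad candidates --- $\mathrm{SU}(3)/\mathrm{U}(1)$ above, and $\mathrm{U}(3)/T^2$ whose intersection with $\mathfrak{su}(3)$ is singular --- are killed by exhibiting an explicit flag of zero curvature, via Wallach's curvature formula for $g_V$ combined with Shen's theorem at the pole $v$, which is legitimate exactly because the degenerate flag contains $v$. Your converse (perturbation) argument matches the paper's Section 5 and is essentially fine, modulo the point, which the paper verifies for $\mathrm{U}(3)/T^2$, that a positively curved invariant metric can be chosen to admit an invariant Killing field of constant length. But without a proof of your central reduction, the forward direction of your argument does not go through.
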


We remark here that generally it is very hard to get a complete classification of all the invariant  $(\alpha,\beta)$-metrics with positive flag curvature and vanishing S-curvature  on the above coset spaces under isometries. In fact, this is even very difficult for Riemannian metrics; see \cite{VZ} for some information on spheres.

  In Section 2, we recall some preliminaries on Finsler spaces, $(\alpha,\beta)$-metrics, geodesic spray, Riemann curvature and S-curvature. In Section 3, we give a characterization of homogeneous $(\alpha,\beta)$-metrics with positive flag curvature and vanishing S-curvature.
  Section 4 is devoted to determining all the possible coset spaces admitting invariant $(\alpha,\beta)$-metrics with positive flag curvature and vanishing S-curvature.
  Finally, in Section 5, we complete the proof of the main theorem.

\section{Preliminaries}
In this section, we recall some fundamental notions and facts in Finsler geometry, especially those relevant to geodesic spray and curvatures in Finsler geometry. For details, the readers are referred to \cite{CS}.
\subsection{Minkowski norm and Finsler metric}
A {\it Minkowski norm} on a real vector space $\mathbf{V}$, $\dim\mathbf{V}=n$, is a
continuous function $F:\mathbf{V}\rightarrow[0,+\infty)$ satisfying the following
conditions:
\begin{description}
\item{\rm (1)} $F$ is positive and smooth on $\mathbf{V}\backslash\{0\}$;
\item{\rm (2)} $F(\lambda y)=\lambda F(y)$ for any $\lambda>0$;
\item{\rm (3)} With respect to any linear coordinates $y=y^i e_i$, the Hessian matrix
\begin{equation}
(g_{ij}(y))=\left(\frac{1}{2}[F^2]_{y^i y^j}(y)\right)
\end{equation}
is positive definite at any $y\ne 0$.
\end{description}
The Hessian matrix $(g_{ij}(y))$ and its inverse $(g^{ij}(y))$ of a Minkowski norm
can be used to raise up or lower down indices of tensors on the vector space.

Given any $y\neq 0$, the Hessian matrix $(g_{ij}(y))$ defines an inner product
$\langle \cdot,\cdot\rangle_y$ (or $\langle \cdot,\cdot\rangle_y^F$ when $F$ needs to be
specified) on $\mathbf{V}$ by
$\langle u,v \rangle_y=g_{ij}(y)u^i v^j$, where $u=u^i e_i$ and $v=v^i e_i$. This inner product can also be
written as
\begin{equation}
\langle u,v\rangle_{y}=\frac{1}{2}\frac{\partial^2}{\partial s
\partial t}[F^2(y+su+tv)]|_{s=t=0},
\end{equation}
which is independent of the choice of the linear coordinates.

A Finsler metric $F$  on a smooth manifold $M$, $\dim M=n$, is a continuous function
$F:TM\rightarrow [0,+\infty)$ such that it is positive and smooth on the slit
tangent bundle $TM\backslash 0$, and its restriction to each tangent space is
a Minkowski norm. We generally say that  $(M,F)$ is a {\it Finsler manifold} or
{\it Finsler space}.

Important examples of Finsler metrics include
Riemannian metrics, Randers metrics, $(\alpha,\beta)$-metrics, etc.
Riemannian metrics are a special class of Finsler metrics whose
 Hessian matrices at each point only depends only on $x\in M$.
A Riemannian  metric can also be defined as a  global smooth section
$g_{ij}dx^i dx^j$ of $\mathrm{Sym}^2(T^* M)$.
Randers metrics are the  simplest and important class of non-Riemannian metrics in Finsler geometry, which are Finsler metrics of the form $F=\alpha+\beta$, in which $\alpha$ is a Riemannian metric and $\beta$ is
a 1-form, where the norm of $\beta$ with respect to $\alpha$ is everywhere $<1$. Randers metrics can be naturally generalized to $(\alpha,\beta)$-metrics which have
the form $F=\alpha\phi(\beta/\alpha)$, where  $\phi$ is a smooth positive function (see \cite{CS}).
The condition for an $(\alpha,\beta)$-metric to be positive definite can be expressed as:
\begin{equation}\label{(2.3)}
\phi (s)-s\phi '(s)+(b_x^2-s^2)\phi''(s)>0,
\end{equation}
for any $x\in M$ and $||s||\leq b_x$, whenever $b_x=||\beta (x)||_{\alpha(x)}$ is positive (see \cite{CS}).
In recent years, much work has been done on Randers metrics and   $(\alpha,\beta)$-metrics.

\subsection{Geodesic spray, geodesics and S-curvature}

 To study the local geometric properties of a Finsler space $(M,F)$, we usually use the {\it standard} local coordinates $(x^i,y^j)$, where $x=(x^i)\in M$ is a local coordinate system on
 an open subset $U$ of $M$,
and $y=y^j\partial_{x^j}\in T_x M$. It is easily seen that $(x^i, y^i)$ defines a local coordinate system on $TU$.

The geodesic spray $G$ is a globally defined   smooth vector field   on $TM\backslash 0$. On a standard
local coordinate system, it can be given as
\begin{equation}
G=y^i\partial_{x^i}-2G^i\partial_{y^i},
\end{equation}
in which
\begin{equation}
G^i=\frac{1}{4} g^{il}([F^2]_{x^k y^l}y^k - [F^2]_{x^l}).
\end{equation}

A curve $c(t)$ on $M$ is called a geodesic if $(c(t),\dot{c}(t))$ is an integration curve
of $G$, in which the tangent field $\dot{c}(t)=\frac{d}{dt}c(t)$ along the curve gives
the speed. For standard local coordinates, a geodesic $c(t)=(c^i(t))$ satisfies the equations
\begin{equation}
\ddot{c}^i(t)+2 G^i(c(t),\dot{c}(t))=0.
\end{equation}

It is well known that the geodesic spray is tangent to the indicatrix bundle in $TM$. Thus
$F(\dot{c}(t))$ is a constant function when $c(t)$ is a geodesic, and
we only consider geodesics of nonzero constant speed.

Z. Shen defines the following important non-Riemannian curvature using the geodesic spray, which is now generally called  S-curvature in the literature.

Let $(M,F)$ be a Finsler space. Given  any local coordinate system, the Busemann-Hausdorff volume form can be
defined as $dV_{\mathrm{BH}}=\sigma(x)dx^1\cdots dx^n$, where
$$
\sigma(x)=\frac{\omega_n}{\mbox{Vol}\{(y^i)\in\mathbb{R}^n|F(x,y^i\partial_{x^i})<1\}},
$$
in which $\mbox{Vol}$ denotes the volume of a subset with respect to the standard Euclidian metric on
$\mathbb{R}^n$, and $\omega_n=\mbox{Vol}(B_n(1))$. It is easily seen that the Busemann-Hausdorff
form is globally defined and does not depend on the specific coordinate system. On the other hand,
although the coefficient function $\sigma(x)$ is only locally defined and
depends on the choice of local coordinates $x=(x^i)$, the distortion function
\begin{equation*}
\tau(x,y)=\ln\frac{\sqrt{\det(g_{ij}(x,y))}}{\sigma(x)}
\end{equation*}
on $TM\backslash 0$ is independent of the local coordinates and is globally defined.

The S-curvature $S(x,y)$ on $TM\backslash 0$ is defined as the derivative of
$\tau(x,y)$ in the direction of the geodesic spray $G(x,y)$.

\subsection{Riemannian curvature and flag curvature}
The Riemannian curvature of  a Finsler manifold is the natural generalization of the relevant  curvature  in the Riemannian case.  It can be defined either by the Jacobi field
or the structure equation for the curvature of the Chern connection. Now let us recall the definition of this important quantity.

Given  a standard local coordinate system and a non-zero vector $y\in T_xM$,
the Riemannian curvature at $y$ is defined  as a linear endomorphism $R_y$ (or $R_y^F$ when the
metric needs to be specified)  of the tangent space $T_x(M)$, such that
$R_y=R^i_k(y)\partial_{x^i}\otimes dx^k:T_x M\rightarrow T_x M$, where
\begin{equation}
R^i_k(y)=2\partial_{x^k}G^i-y^j\partial^2_{x^j y^k}G^i+2G^j\partial^2_{y^j y^k}G^i
-\partial_{y^j}G^i\partial_{y^k}G^j.
\end{equation}
It is easily seen that the Riemannian curvature $R_y$ is self-adjoint with respect to $\langle\cdot,\cdot\rangle_y$.

Using the Riemannian curvature, we can generalize the notion of sectional curvature  in Riemannian geometry to  that of  flag curvature in Finsler geometry. Let $y$ be a nonzero tangent vector in
$T_x M$ and $\mathbf{P}$ a 2-dimensional subspace (called tangent plane) containing $y$.
Assume that $\mathbf{P}$ is linearly spanned by $y$ and $v$. Then the flag curvature
of the flag $(x, y, \mathbf{P})$   is defined to be
\begin{equation}\label{flag}
K(x,y,\mathbf{P})=\frac{\langle R_y v,v\rangle_y}
{\langle y,y\rangle_y \langle v,v\rangle_y-\langle y,v\rangle_y^2}.
\end{equation}
The flag curvature may also be denoted as $K(x,y, y\wedge v)$, or $K^F(\cdot,\cdot,\cdot)$ when the metric needs to be specified. It is obvious that
the definition of (\ref{flag}) does not depends on the choice of the nonzero tangent vector $v$ in $\mathbf{P}$.

A nowhere zero vector field  $Y$ on an open subset $\mathcal{U}$ of $M$ is called  a geodesic field on $\mathcal{U}$,  if the integration
curves of $Y$ are geodesics of nonzero constant speed. Shen proves the following useful  theorem (see \cite{CS}).
\begin{theorem}\label{shen-thm}
Let $Y$ be a geodesic field on an open subset $\mathcal{U}\subset M$ and suppose  that
$y=Y(x)\neq 0$,  for any $x\in \mathcal{U}$. Then the Riemannian curvature $R_y^F$ for $F$ equals the Riemannian curvature
${R}_y^{g_Y}$ for the Riemannian metric $g_Y$ on $\mathcal{U}$.
\end{theorem}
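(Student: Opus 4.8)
The plan is to compare the two sprays, and their first derivatives, along the geodesic field $Y$, and then to read off the equality of the curvatures directly from the defining formula for $R^i_k$. Write $a_{ij}(x)=g_{ij}(x,Y(x))$ for the components of the osculating metric $g_Y$, let $G^i$ and $\bar{G}^i$ denote the spray coefficients of $F$ and of $g_Y$ respectively, and set $D^i=G^i-\bar{G}^i$. Two elementary facts will be used throughout. First, since each $g_{ij}(x,\cdot)$ is homogeneous of degree $0$ in $y$, the Cartan tensor $C_{ijk}=\tfrac12\partial_{y^k}g_{ij}$ satisfies $C_{ijk}y^k=0$; hence any Cartan term contracted with $Y$ drops out along the field. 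Second, the hypothesis that $Y$ is a geodesic field is exactly the relation $Y^j\partial_{x^j}Y^i=-2G^i(x,Y)$ on $\mathcal{U}$.

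The first step I would carry out is to show that $D^i$ and its vertical derivative vanish along $Y$, i.e. $D^i(x,Y(x))=0$ and $\partial_{y^k}D^i(x,Y(x))=0$ for every $x\in\mathcal{U}$. Here one writes $\bar{G}^i=\tfrac12\hat{\Gamma}^i_{kl}y^ky^l$ through the Christoffel symbols $\hat{\Gamma}$ of $a_{ij}$; differentiating $a_{ij}(x)=g_{ij}(x,Y(x))$ by the chain rule produces extra terms of the form $2C_{ijk}\partial_{x^m}Y^k$. When these are contracted against $Y$ to form $\hat{\Gamma}^i_{jl}Y^l$, the relation $C_{ijk}Y^k=0$ annihilates all but one of them, and the single survivor is converted by the geodesic equation $Y^l\partial_{x^l}Y^k=-2G^k$ into $-2C^i_{jk}G^k$. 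Comparing with the standard identity $\partial_{y^j}G^i=\gamma^i_{jl}y^l-2C^i_{jl}G^l$ for the Finsler spray (with $\gamma^i_{jl}$ the formal Christoffel symbols of $F$, the identity itself being a consequence of $2G^i=\gamma^i_{kl}y^ky^l$ and $C_{ijk}y^k=0$), one obtains $\partial_{y^j}G^i(x,Y)=\hat{\Gamma}^i_{jl}(x)Y^l=\partial_{y^j}\bar{G}^i(x,Y)$, and, contracting once more with $Y$ and using $C^i_{jl}Y^j=0$, also $G^i(x,Y)=\bar{G}^i(x,Y)$. I expect this to be the main obstacle: it is where the geodesic hypothesis is genuinely used, and it requires the spray identity above together with careful bookkeeping of the Cartan contractions.

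With these two facts in hand the curvatures are compared by subtracting the defining formulas, so that $R^i_k(Y)-\bar{R}^i_k(Y)$ is assembled from $D^i$. Since $D^i(x,Y(x))\equiv0$ and $\partial_{y^k}D^i(x,Y(x))\equiv0$ as identities in $x$, differentiating them in $x^j$ trades horizontal derivatives of $D^i$ for vertical ones: one finds $\partial_{x^k}D^i(x,Y)=0$ and, invoking the geodesic equation again, $Y^j\partial^2_{x^jy^k}D^i(x,Y)=2G^l\partial^2_{y^ly^k}D^i(x,Y)$. The term $\partial_{y^j}G^i\partial_{y^k}G^j-\partial_{y^j}\bar{G}^i\partial_{y^k}\bar{G}^j$ vanishes at $y=Y$ by the vertical-derivative agreement, and $G^j=\bar{G}^j$ there by the spray agreement, so the two remaining second-order contributions reduce to $-Y^j\partial^2_{x^jy^k}D^i+2G^j\partial^2_{y^jy^k}D^i=-2G^l\partial^2_{y^ly^k}D^i+2G^j\partial^2_{y^jy^k}D^i=0$. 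Hence $R^i_k(Y)=\bar{R}^i_k(Y)$, which, since $g_Y$ is Riemannian with $a_{ij}=g_{ij}(\cdot,Y)$, is precisely the asserted equality $R_y^F=R_y^{g_Y}$. It is worth stressing that this comparison cannot be done term by term: the Berwald coefficients $\partial^2_{y^jy^k}G^i(x,Y)$ need not equal $\hat{\Gamma}^i_{jk}$ unless $F$ is of Berwald type, so the equality of curvatures really issues from the cancellation above rather than from matching the sprays to second vertical order.
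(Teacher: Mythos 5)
The paper does not actually prove this statement: Theorem \ref{shen-thm} is quoted as a known result of Shen, with a citation to \cite{CS}, so there is no in-paper argument to compare yours against. Judged on its own, your proof is correct and is essentially the standard argument from the literature: you establish that the spray coefficients and their first vertical derivatives (the nonlinear connection coefficients) of $F$ and of the osculating metric $g_Y$ agree along the geodesic field, via the chain-rule expansion of the Christoffel symbols of $a_{ij}(x)=g_{ij}(x,Y(x))$, the identity $\partial_{y^j}G^i=\gamma^i_{jl}y^l-2C^i_{jl}G^l$, and the geodesic equation $Y^j\partial_{x^j}Y^i=-2G^i(x,Y)$; you then differentiate the identities $D^i(x,Y(x))=0$ and $\partial_{y^k}D^i(x,Y(x))=0$ in $x$ to control the horizontal and mixed derivatives appearing in the curvature formula, and the remaining terms cancel. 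All the Cartan-tensor contractions check out (they rely on the full symmetry of $C_{ijk}$ together with $C_{ijk}y^k=0$), and your closing remark is exactly the right caution: the second vertical derivatives $\partial^2_{y^jy^k}G^i(x,Y)$ need not equal $\hat{\Gamma}^i_{jk}$ unless $F$ is Berwald, so the equality of curvatures must come from the cancellation $-Y^j\partial^2_{x^jy^k}D^i+2G^j\partial^2_{y^jy^k}D^i=0$ rather than from a term-by-term match. This fills in, correctly, the proof the paper delegates to \cite{CS}.
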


\section{Homogeneous $(\alpha,\beta)$-spaces with vanishing S-curvature}
Let $G$ be a connected simply connected Lie group and $H$ a compact connected subgroup of $G$,
with  Lie algebras as $\mathfrak{g}$ and $\mathfrak{h}$,
respectively. Assume that $F$ is a $G$-invariant
non-Riemannian $(\alpha,\beta)$-metric on the smooth coset space $M=G/H$.
In our previous consideration \cite{DX1}, it is proved that   $F$ can be presented as $F=\alpha\phi(\beta/\alpha)$, in which
both the Riemannian metric $\alpha$ and the $1$-form $\beta$ on $M$ are
$G$-invariant. Restricted to  $\mathfrak{m}=\mathfrak{g}/\mathfrak{h}$,
which can be identified with the tangent space $T_{eH}M$, the metric $\alpha$
defines an inner product $\langle\cdot,\cdot\rangle$, and $\beta$ defines
the linear functional $\beta(\cdot)=\langle\cdot,v\rangle$, for some nonzero $v\in\mathfrak{m}$. For simplicity we denote their restrictions to $\mathfrak{m}$  as
$\alpha$ and $\beta$, respectively. Because both $\alpha$ and $\beta$ are
$\mathrm{Ad}(H)$-invariant, $\mathfrak{m}$ can be identified with the
$\alpha$-orthogonal complement of $\mathfrak{h}$ in $\mathfrak{g}$, that is,
we have the $\alpha$-orthogonal decomposition $\mathfrak{g}=\mathfrak{h}+
\mathfrak{m}$ which satisfies $[\mathfrak{h},\mathfrak{m}]\subset\mathfrak{m}$. Then the $\alpha$-dual $v$
of $\beta$ can be viewed as a vector in $\mathfrak{g}$ with $\mathrm{Ad}(H)v=v$, i.e., $[\mathfrak{h},v]=0$. We denote the global $\alpha$-dual vector field of $\beta$ as $V$.

In \cite{ChengShen2009}, the authors give a formula of the S-curvature for an $(\alpha,\beta)$-metric
in a local coordinate system $x=(x^i)\in M$ and $y=y^j\partial_{x^i}\in TM$. In the homogeneous case,  where $\beta$ (or equivalently $V$) has constant
$\alpha$-length $b>0$, the formula can be simplified as
\begin{equation}\label{s-curvature-formula-1}
S=\alpha^{-1}\frac{\Phi}{\Delta^2}(r_{00}-2\alpha Qs_0),
\end{equation}
where
$$Q=\frac{\phi'}{\phi-s\phi'}, \Delta=1+sQ+(b^2+s^2)Q',$$
$$\Phi=-(Q-sQ')(n\Delta+1+sQ)-(b^2-s^2)(1+sQ)Q'',$$
and $r_{00}$ and $s_0$ are only relevant to $\alpha$ and $\beta$. As pointed out in \cite{ChWW}, in this case,
$\Phi$ is a nonzero scalar function times the norm of the mean Cartan tensor at any $(x,y)$.
Therefore, if  $\Phi$ vanishes identically, then  $F$ must be a  Riemannian metric.

 If the S-curvature of the non-Riemannian homogeneous $(\alpha,\beta)$-metric
$F$ on $G/H$ vanishes identically,  and the corresponding $\Phi$ is not constantly $0$, then by Theorem 1.2 of \cite{ChengShen2009}, only the third case can happen, i.e., $V$
is a Killing vector field of constant length with respect to $\alpha$, which implies that $V$ is
a Killing vector field of constant length with respect to $F$. On the other hand, if  $V$ is a Killing vector field of
constant length with respect to $F$, then it is also a Killing vector field of constant length with respect to $\alpha$. Then by Theorem 1.2 of
\cite{ChengShen2009} again,  the S-curvature  of $F$ vanishes identically.

Meanwhile, in \cite{DengWang}, S. Deng and X. Wang gave another presentation of
(\ref{s-curvature-formula-1}) for a homogeneous space $M=G/H$ without using local coordinates, namely,   for any nonzero tangent vector $y\in\mathfrak{m}=T_{eH}M$ at $(eH,y)$, we have
\begin{equation}\label{s-curvature-formula-2}
S(eH,y)=-\frac{1}{\alpha(y)}\frac{\Phi}{2\Delta^2}(-b\langle[v,y]_\mathfrak{m},y\rangle
-\alpha(y)Q\langle[v,y]_\mathfrak{m},v\rangle).
\end{equation}
From this formula it follows that,  if $\langle[v,y]_\mathfrak{m},y\rangle=\langle[v,y]_\mathfrak{m},v\rangle=0$,
for any $y\in\mathfrak{m}$,
then $S(eH,y)=0$ for any $y\in \mathfrak{m}=T_{eH}(M)$. By the homogeneity of $M=G/H$, the S-curvature vanishes
identically.

On the other hand, assume that the S-curvature of the homogeneous
non-Riemannian $(\alpha,\beta)$-metric $F=\alpha\phi(\beta/\alpha)$ vanishes
identically and there exists  $y\in\mathfrak{m}$, such that
\begin{equation}\label{9999}
\langle[v,y]_\mathfrak{m},y\rangle\neq 0\mbox{ or }
\langle[v,y]_{\mathfrak{m}},v\rangle\neq 0.
\end{equation}
Then it is easy to check that   (\ref{9999}) is still valid with $y$ changed to $y_0=c_1y+c_2v$ with $c_1\ne 0$.
Note that there exists  $y_0$ such that $\langle y_0,v\rangle=0$ and
$\alpha(y_0)=\alpha(v)=b$. For simplicity, we can change $\beta$ by a scalar multiplication and change $\phi$ correspondingly without changing the metric $F=\alpha\phi(\beta/\alpha)$, such that  $b=1$.
Denote $y_1(t)=\sqrt{1-t^2}y_0+tv$ and
$y_2(t)=-\sqrt{1-t^2}y_0+tv$. Then for any $t\in(-1,1)$, we have
$$\alpha(y_1(t))=\alpha(y_2(t))=1, \beta(y_1(t))=\beta(y_2(t))=t.$$
Thus the values of $s$, $Q(s)$, $Q'(s)$, $Q''(s)$, $\Delta$ and $\Phi$ coincide
for $y_1(t)$ and $y_2(t)$ respectively. So
\begin{eqnarray*}
S(eH,y_1(t))+S(eH,y_2(t))&=&\frac{\Phi}{2\Delta^2}
(\langle[v,y_1(t)]_\mathfrak{m},y_1(t)\rangle+\
\langle[v,y_2(t)]_\mathfrak{m},y_2(t)\rangle)\\
&=& (1-t^2)\frac{\Phi}{\Delta^2}\langle[v_0,y_0]_\mathfrak{m},y_0\rangle,
\end{eqnarray*}
which by our assumption, must be $0$ for any $t\in (-1,1)$.
 Since $F$ is non-Riemannian, $\Phi(t)=\Phi(s(y_1(t)))=\Phi(s(y_2(t)))$  is not constantly $0$. Hence we have $\langle [v_0,y_0]]_\mathfrak{m},y_0\rangle=0$.
Then by (\ref{9999}), $\langle[v_0,y_0]_\mathfrak{m},v_0\rangle\neq 0$.
Since  $S(y_1(t))\equiv 0$ for any $t\in (-1,1)$, we have
\begin{equation}\label{9998}
(\phi(t)-t\phi'(t))(t+Q(s(y_1(t))))=t\phi(t)+(1-t^2)\phi'(t)\equiv 0
\end{equation}
on the nonempty open sub-interval of $t\in (-1,1)$
such that $\Phi((t))\neq 0$. Taking the derivative of (\ref{9998}) with respect to $t$, we get
$$\phi(t)-t\phi'(t)+(1-t^2)\phi''(t)\equiv 0,$$
on some nonempty open interval. This is a contradiction with the
condition (\ref{(2.3)}) for $(\alpha,\beta)$-metrics.

To summarize, we have proven the following proposition.
\begin{proposition}
Assume that $F=\alpha\phi(\beta/\alpha)$ is a non-Riemannian homogeneous $(\alpha,\beta)$-metric
on the homogeneous space $M=G/H$. Then the following statements are equivalent:
\begin{description}
\item{\rm (1)} The S-curvature of $(M,F)$ vanishes identically.
\item{\rm (2)} The vector field $V$ is a Killing vector field of constant length with respect to $F$.
\item{\rm (3)} The $\alpha$-dual $v$ of $\beta$ in $\mathfrak{m}$ satisfies
\end{description}
\begin{equation}\label{0000}
\langle [v,y]_\mathfrak{m},y\rangle=\langle[v,y]_\mathfrak{m},v\rangle=0,\, \forall y\in\mathfrak{m},
\end{equation}
in which $[\cdot,\cdot]_{\mathfrak{m}}=\mathrm{pr}_{\mathfrak{m}}\circ[\cdot,\cdot]$.
\end{proposition}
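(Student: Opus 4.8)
The plan is to prove the three statements equivalent by a short cycle of implications, using the two expressions for the S-curvature recorded above together with the structure theorem of Cheng and Shen. Throughout I would keep in hand the standing fact that, since $F$ is non-Riemannian, the scalar $\Phi$ entering both formulas is a nonzero multiple of the norm of the mean Cartan tensor, so it cannot vanish identically along any arc of directions on which the data $s,Q,Q',Q'',\Delta$ vary continuously; this is what permits dividing by $\Phi$ on a suitable open subinterval.

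For $(1)\Leftrightarrow(2)$ I would quote Theorem 1.2 of \cite{ChengShen2009}. For a non-Riemannian $(\alpha,\beta)$-metric with $\Phi\not\equiv 0$, vanishing S-curvature forces the third alternative of that theorem, namely that $V$ is Killing of constant length with respect to $\alpha$; and for $(\alpha,\beta)$-metrics, being Killing of constant $\alpha$-length is equivalent to being Killing of constant $F$-length. Reading the same theorem in the reverse direction supplies the converse, so $(1)$ and $(2)$ are interchangeable.

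The easy analytic direction is $(3)\Rightarrow(1)$. Substituting the hypotheses $\langle[v,y]_\mathfrak{m},y\rangle=\langle[v,y]_\mathfrak{m},v\rangle=0$ of (\ref{0000}) directly into the coordinate-free formula (\ref{s-curvature-formula-2}), both summands inside the parenthesis vanish, so $S(eH,y)=0$ for every nonzero $y\in\mathfrak{m}$; $G$-invariance then propagates this to $S\equiv 0$ on all of $TM\setminus 0$.

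The substance of the proposition, and the step I expect to be the main obstacle, is $(1)\Rightarrow(3)$, which I would argue by contradiction. Assuming $S\equiv 0$ while (\ref{0000}) fails, some $y$ satisfies (\ref{9999}); replacing $y$ by $y_0=c_1y+c_2v$ reduces to the case $y_0\perp v$ with $\alpha(y_0)=\alpha(v)=b$, and rescaling $\beta$ (adjusting $\phi$ so as not to alter $F$) normalizes $b=1$. The decisive device is the pair $y_1(t)=\sqrt{1-t^2}\,y_0+tv$ and $y_2(t)=-\sqrt{1-t^2}\,y_0+tv$, reflections of one another across $v$, which share $\alpha=1$ and $\beta=t$ and hence share $s,Q,Q',Q'',\Delta,\Phi$. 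Since $[v,y_i(t)]$ is a scalar multiple of $[v,y_0]$, summing $S(eH,y_1(t))+S(eH,y_2(t))$ in (\ref{s-curvature-formula-2}) cancels the terms paired with $v$ and doubles those paired with $y_0$, leaving $(1-t^2)\frac{\Phi}{\Delta^2}\langle[v,y_0]_\mathfrak{m},y_0\rangle$; as $\Phi\not\equiv 0$ this forces $\langle[v,y_0]_\mathfrak{m},y_0\rangle=0$, whereupon (\ref{9999}) makes $\langle[v,y_0]_\mathfrak{m},v\rangle$ nonzero. Feeding these two facts back into $S(eH,y_1(t))\equiv 0$ and cancelling the nonvanishing factors leaves the functional identity (\ref{9998}), namely $t\phi(t)+(1-t^2)\phi'(t)\equiv 0$ on an open subinterval; differentiating gives $\phi(t)-t\phi'(t)+(1-t^2)\phi''(t)\equiv 0$, in direct conflict with the positive-definiteness inequality (\ref{(2.3)}) at $b=1$. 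The two delicate points are confirming that the symmetrization truly annihilates the $v$-paired term (which rests on $[v,y_i(t)]$ being proportional to $[v,y_0]$ and on the matched values of $Q$ and $\Phi$) and checking that the derived second-order relation contradicts (\ref{(2.3)}) on a genuine interval rather than merely at isolated points; these form the crux of the argument.
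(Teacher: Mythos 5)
Your proposal is correct and follows essentially the same route as the paper: the equivalence $(1)\Leftrightarrow(2)$ via Theorem 1.2 of \cite{ChengShen2009}, the direct substitution into (\ref{s-curvature-formula-2}) plus homogeneity for $(3)\Rightarrow(1)$, and the contradiction argument for $(1)\Rightarrow(3)$ using the reflected pair $y_1(t),y_2(t)$, the vanishing of $\langle[v,y_0]_\mathfrak{m},y_0\rangle$ forced by $\Phi\not\equiv 0$, and the differentiation of (\ref{9998}) against the positivity condition (\ref{(2.3)}). The two "delicate points" you flag are exactly the steps the paper carries out, and your verification of the cancellation in the symmetrized sum is sound.
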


Note that $\mathfrak{m}$ can be $\alpha$-orthogonal
decomposed as $\mathfrak{m}=\mathbb{R}v+v^{\perp}$. Applying (\ref{0000}) to a nonzero vector
$y=cv+y'\in\mathfrak{m}$ with $y'\in v^\perp$, we
have
\begin{eqnarray}
\langle[v,y'],v\rangle=0, \mbox{ and } \langle[v,y'],y'\rangle=0.\label{0001}
\end{eqnarray}
Let $K$ be the closure of the connected subgroup of $G$ corresponding to the subalgebra
$\mathfrak{h}\oplus\mathbb{R}v$, and $\mathfrak{p}=v^\perp$.
Then  by (\ref{0001}) we have
$\mathrm{Ad}(K)\mathfrak{p}\subset\mathfrak{p}$ and it is easily seen that the restriction of $\alpha$ to $\mathfrak{p}$ is $\mathrm{Ad}(K)$-invariant.
\section{Classification of positively curved homogeneous
$(\alpha,\beta)$-spaces with vanishing S-curvature}
We keep all the above notations. Assume that the homogeneous $(\alpha,\beta)$-metric
$F$ on $M=G/H$ has vanishing S-curvature and positive flag curvature.
 We first prove that $\mathfrak{h}\oplus\mathbb{R}v$ is the Lie algebra of some closed connected subgroup $K$ of $G$. More precisely, we have the following lemma.
\begin{lemma} \label{key-lemma}
Assume that  $F=\alpha\phi(\beta/\alpha)$ is a non-Riemannian
homogeneous $(\alpha,\beta)$-metric on $M=G/H$ with vanishing S-curvature and
positive flag curvature.  Then we have the following:
\begin{description}
\item{\rm (1)} The subalgebra $\mathfrak{h}\oplus\mathbb{R}v$ is
the Lie algebra of some closed connected subgroup $K$ in $G$.
\item{\rm (2)} Denote the orthogonal complement of $v$   with respect to $\alpha$ as $\mathfrak{p}=v^\perp$. Then
we have an orthogonal decomposition $\mathfrak{g}=\mathfrak{k}+\mathfrak{p}$, and the restriction of $\alpha$ to $\mathfrak{p}$ defines a homogeneous Riemannian metric $g$ on $G/K$.
\item{\rm (3)} There is a suitable constant $c>0$, such that the natural projection $\pi:(G/H,F)\rightarrow(G/K,cg)$ is a Finslerian submersion.
\end{description}
\end{lemma}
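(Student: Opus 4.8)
The plan is to treat part (1) as the essential point and to obtain parts (2) and (3) from it by the standard reductive construction. Since $[\mathfrak h,v]=0$, the space $\mathfrak k_0=\mathfrak h\oplus\mathbb R v$ is a subalgebra; let $K_0$ be the corresponding connected subgroup and $K=\overline{K_0}$ its closure, a closed connected subgroup with Lie algebra $\mathfrak k\supseteq\mathfrak k_0$. Because $[\mathfrak h,v]=0$, the one--parameter group $\exp(\mathbb R v)$ commutes with the connected group $H$, so $K=H\cdot T$ with $T=\overline{\exp(\mathbb R v)}$; as $(M,F)$ is complete and positively curved it is compact (Bonnet--Myers), hence its isometries form a compact group and $T$ is a torus, with $\mathfrak k=\mathfrak h+\mathfrak t$ and $v\in\mathfrak t$. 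Using the $\alpha$--orthogonal splitting $\mathfrak g=\mathfrak h\oplus\mathfrak m$ together with $\mathfrak h\subset\mathfrak k$ one gets $\mathfrak k=\mathfrak h\oplus(\mathfrak k\cap\mathfrak m)$, and $\mathfrak k\cap\mathfrak m=\mathrm{pr}_{\mathfrak m}\mathfrak t$ is exactly the tangent space at $eH$ to the orbit $\mathcal O=T\cdot eH$. Thus part (1) is equivalent to $\dim(\mathfrak k\cap\mathfrak m)=1$, i.e.\ to showing that this torus orbit is one--dimensional, so that $\exp(\mathbb R v)$ is already closed.

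The heart of the argument is to rule out $\dim\mathcal O\ge 2$ using positive flag curvature, and the device is the osculating Riemannian metric $g_V=\langle\cdot,\cdot\rangle_V$ of the field $V$. By the Proposition, $V$ is a Killing field of constant length for $F$, so $V$ is a geodesic field; Theorem~\ref{shen-thm} then gives $R^F_V=R^{g_V}_V$, whence for every $2$--plane $P\ni v$ one has $K^{g_V}(eH,P)=K^F(eH,v,P)>0$. I would next observe that the whole torus $T\subset G$ acts on $(M,g_V)$ by isometries: each $\exp(tw)$ with $w\in\mathfrak t$ preserves $F$ by $G$--invariance and preserves $V$ because $[w,v]=0$, hence preserves $g_V$. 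Consequently $(\mathcal O,g_V|_{\mathcal O})$ is a homogeneous metric on a torus, hence flat, while $V$ is a constant--length Killing field for $g_V$ (indeed $\langle V,V\rangle_V=F(V)^2$ is constant), so its integral curves are $g_V$--geodesics tangent to $\mathcal O$ and the second fundamental form $\mathrm{II}$ of $\mathcal O$ in $(M,g_V)$ satisfies $\mathrm{II}(v,v)=0$. If $\dim\mathcal O\ge2$, pick a $g_V$--orthonormal pair $e_1,e_2\in T_{eH}\mathcal O$ with $e_1$ proportional to $v$; the Gauss equation on the flat $\mathcal O$ then yields
\begin{equation*}
K^{g_V}(e_1\wedge e_2)=\langle \mathrm{II}(e_1,e_1),\mathrm{II}(e_2,e_2)\rangle-|\mathrm{II}(e_1,e_2)|^2=-|\mathrm{II}(e_1,e_2)|^2\le 0,
\end{equation*}
contradicting $K^{g_V}(e_1\wedge e_2)>0$. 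Hence $\dim\mathcal O=1$ and part (1) follows.

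Granting part (1), $K$ is closed with $\mathfrak k=\mathfrak h\oplus\mathbb R v$ and $\mathfrak k\cap\mathfrak m=\mathbb R v$, so $\mathfrak g=\mathfrak k\oplus\mathfrak p$ with $\mathfrak p=v^\perp$. The $\mathrm{Ad}(K)$--invariance of $\mathfrak p$ and of $\alpha|_{\mathfrak p}$ recorded just before the lemma (coming from (\ref{0001}) together with the compactness of $K$) shows that $\alpha|_{\mathfrak p}$ is the isotropy data of a $G$--invariant Riemannian metric $g$ on $G/K$, which is part (2). For part (3), the fibre of $\pi\colon G/H\to G/K$ through $eH$ is $K/H$, with vertical space $\mathfrak k\cap\mathfrak m=\mathbb R v$ and horizontal space $\mathfrak p=v^\perp$. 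Since $\beta$ vanishes on $\mathfrak p$, for horizontal $u$ we have $F(u)=\alpha(u)\phi(0)$, i.e.\ $F|_{\mathfrak p}=\phi(0)\,\alpha|_{\mathfrak p}$ is Riemannian; choosing $c=\phi(0)^2$ makes $d\pi$ carry the horizontal $F$--norm isometrically onto the $(cg)$--norm. It then remains to verify the defining property of a Finslerian submersion, namely that a horizontal vector minimises $F$ in its fibre class; this reduces to showing that $s=0$ minimises $F(u+sv)$ for fixed horizontal $u$, which I expect to follow from the strict convexity of $F$ together with the fact that the fibres are the integral curves of the constant--length Killing field $V$.

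The main obstacle is part (1): everything hinges on converting the positive flag curvature of the Finsler metric $F$ into positive sectional curvature of a genuine Riemannian metric to which the Gauss equation applies. Theorem~\ref{shen-thm} is precisely what enables this conversion, and once one notices that the closure torus acts by $g_V$--isometries, the intrinsic flatness of the orbit and the vanishing of $\mathrm{II}(v,v)$ combine through Gauss to exclude a $\ge 2$--dimensional orbit. Parts (2) and (3) are then essentially formal, the only genuinely Finslerian input being the identity $F|_{v^\perp}=\phi(0)\,\alpha$ that renders the base metric Riemannian.
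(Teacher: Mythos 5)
Your reduction of part (1) to the statement that the orbit $\mathcal{O}=T\cdot eH$ is one-dimensional is sound, but the curvature argument meant to rule out $\dim\mathcal{O}\ge 2$ rests on a sign error in the Gauss equation, and this is the heart of your proof. For orthonormal $e_1,e_2$ tangent to a submanifold $N\subset (M,g)$ the Gauss equation reads
\begin{equation*}
K^{N}(e_1\wedge e_2)=K^{M}(e_1\wedge e_2)+\langle \mathrm{II}(e_1,e_1),\mathrm{II}(e_2,e_2)\rangle-|\mathrm{II}(e_1,e_2)|^2,
\end{equation*}
so with $N=\mathcal{O}$ flat and $\mathrm{II}(e_1,e_1)=0$ one obtains $K^{g_V}(e_1\wedge e_2)=+|\mathrm{II}(e_1,e_2)|^2\ge 0$, which is perfectly compatible with positive curvature: you solved the equation for the wrong curvature. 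This is not a repairable slip, because the configuration you want to exclude actually exists in positive curvature. The Clifford torus in the round $S^3$ is a flat two-dimensional orbit of an isometric torus action, tangent to the Hopf field, which is a Killing field of constant length with geodesic integral curves (so $\mathrm{II}(v,v)=0$; the torus is minimal but not totally geodesic), yet $S^3$ has constant positive curvature. Hence no argument using only flatness of the orbit and $\mathrm{II}(v,v)=0$ can produce a contradiction. The paper's proof of (1) uses genuinely stronger, homogeneity-based input: the rank inequality $\mathrm{rk}\,\mathfrak{g}'\le \mathrm{rk}\,\mathfrak{h}'+1$ for positively curved homogeneous Finsler spaces (Lemma 5.1 of \cite{XDHH}); since $v$ commutes with $\mathfrak{h}$, this forces $\mathfrak{h}'\oplus\mathbb{R}v$ to contain a Cartan subalgebra of $\mathfrak{g}'$, and a subalgebra of full rank in a compact Lie algebra always integrates to a closed connected subgroup. (A smaller point: you take closures inside $G$, which is only assumed simply connected; the closure of $\exp(\mathbb{R}v)$ in $G$ need not be compact even though $I(M,F)$ is. The paper first splits off the ideal $\mathfrak{g}''\subset\mathfrak{h}$ and works in the compact image group $G'$, then pulls back.)

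Part (3) is also incorrect as written. For a Finslerian submersion in the sense of \cite{PD1}, the norm induced on the base is the quotient norm $F'(\bar u)=\min_{s}F(u+sv)$, not the restriction of $F$ to the $\alpha$-orthogonal complement of $v$. For $u\perp_\alpha v$ and $b=\|v\|_\alpha$ a direct computation gives
\begin{equation*}
\frac{d}{ds}\Big|_{s=0}F(u+sv)=\phi'(0)\,b^2,
\end{equation*}
which is nonzero whenever $\phi'(0)\neq 0$ (e.g.\ for every Randers metric $\phi(s)=1+s$); thus $s=0$ is not even a critical point of $F$ along the fibre, your strict-convexity expectation fails, and $c=\phi(0)^2$ is the wrong constant. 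The paper's route avoids this: it \emph{defines} $F'$ on $\mathfrak{p}$ as the norm induced by the projection (so the submersion property holds by construction, following \cite{PD1}), and then notes that $F$, being an $(\alpha,\beta)$-norm, is invariant under all $\alpha$-orthogonal transformations of $\mathfrak{m}$ fixing $v$; hence $F'$ is a rotationally invariant Minkowski norm on $\mathfrak{p}$ and must be a constant multiple of $\alpha|_{\mathfrak{p}}$, with no need to locate the minimizing direction. Your part (2) is fine once (1) is granted, but as it stands both (1) and (3) rest on steps that fail.
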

\begin{proof}
(1) Since $(M,F)$ is positively curved,  $M$ is compact, and $\mathfrak{g}$
can be decomposed as a direct sum $\mathfrak{g}=\mathfrak{g}'+\mathfrak{g}''$, where $\mathfrak{g}'$ is the Lie algebra of a compact connected group $G'$
which is the image group of natural projection from $G$ to $I_0(M,F)$, and
$\mathfrak{g}''$ is an ideal contained in $\mathfrak{h}$.  Correspondingly, we also have a direct sum decomposition $\mathfrak{h}=\mathfrak{h}'\oplus\mathfrak{h}''$. By
the positive flag curvature condition, and Lemma 5.1 of \cite{XDHH} (see also \cite{DH1}), we have
$\mathrm{rk}\mathfrak{g}'\leq\mathfrak{rk}\mathfrak{h}'+1$ (in this case,
the equality may happen). Then  the subalgebra $\mathfrak{h}'\oplus\mathbb{R}v$ contains
a Cartan subalgebra of $\mathfrak{g}$, so it is the Lie algebra of a closed
connected subgroup $K'$ in $G'$. Now the pre-image $K$ in $G$ gives the subgroup
required in the lemma.

(2) It follows immediately from the observation at the end of last section,
about the condition that S-curvature vanishes identically,  and part (1) of
this lemma.

(3) The orthogonal projection $\mathrm{pr}$ from $\mathfrak{m}=\mathbb{R}v+\mathfrak{p}$ onto $\mathfrak{p}$ induces
a unique Minkowski norm $F'$ on $\mathfrak{p}$ through the Finsler metric $F$, such that $\mathrm{pr}:(\mathfrak{m},F)\rightarrow(\mathfrak{p},F')$
is a submersion (see \cite{PD1}). Because $F$ is an $(\alpha,\beta)$-norm, it is not hard to
see that $F'$ must be a scalar multiple of $\alpha|_{\mathfrak{p}}$, which is also
$\mathrm{Ad}(K)$-invariant, and defines a $G$-invariant Riemannian metric
$F'=cg$ on $M'=G/K$ for some $c>0$. To see that the natural projection
$\pi:(G/H,F)\rightarrow(G/K,cg)$ is a global submersion, we need to check that
$\pi_*:(T_{gH}M,F(gH,\cdot)\rightarrow T_{gK}M',F'(gK,\cdot))$ is
a submersion for any $g\in G$. This follows from the facts that $\pi_*|_{gH}=(L_g)_*\circ\pi_*|_{eH}\circ(L_{g^{-1}})_*$,
that the second section of  the right side  is a submersion, and that the other two sections are linear isometries between Minkowski norms. This completes the proof of the lemma.
\end{proof}

Now we start to prove the main theorem.

Since $(G/H,F)$ has positive flag curvature, by Lemma \ref{key-lemma} and \cite{XDHH},  $(G/K,cg)$ is an even dimensional Riemannian homogeneous
space with positive sectional curvature. Then by the classification of Wallach, Aloff-Wallach and B. Bergery,  the local structure of $G/K$ can be completely determined, and from this we can deduce the classification of the coset spaces $G/H$.

To make the statement more clearly, we introduce the notion of  equivalence between  homogeneous Finsler spaces. Two homogeneous Finsler spaces $G/H$ and $G'/H'$ are called equivalent, if they are locally isometric to each other. Up to equivalence, we can assume that $G$ is compact, $H$ is connected, $G/H$ is simply connected and
$\mathfrak{h}$ does not contain any nontrivial ideal of $\mathfrak{g}$.
Moreover, we do not need to distinguish $G/H$ and $G'/H'$ if there is an isomorphism from $G$ to $G'$ which  maps $H$ onto $H'$.

Notice that $\mathfrak{k}$ may contain a nonzero ideal of $\mathfrak{g}$ corresponding  to the additional $\mathbb{R}v$-factor.  Now we prove that, in this case, this ideal must be of dimension $1$.
\begin{lemma} Let $\mathfrak{g}$ be a compact Lie algebra,  and $\mathfrak{h}$
a subalgebra of $\mathfrak{g}$ which does not contain any nontrivial ideal of $\mathfrak{g}$. Let $\mathfrak{k}=\mathfrak{h}\oplus\mathbb{R}v$ be another
subalgebra of $\mathfrak{g}$ which contain a nontrivial ideal $\mathfrak{g}'$
of $\mathfrak{g}$.  Then $\dim\mathfrak{g}'=1$.
\end{lemma}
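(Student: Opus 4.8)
The plan is to exploit the compactness of $\mathfrak{g}$ together with the fact, recorded in Section 3, that $[\mathfrak{h},v]=0$. Since $\mathfrak{g}$ is a compact Lie algebra it is reductive, so it splits as $\mathfrak{g}=\mathfrak{z}\oplus\mathfrak{g}_1\oplus\cdots\oplus\mathfrak{g}_r$, where $\mathfrak{z}$ is the center and each $\mathfrak{g}_i$ is a simple ideal. Consequently every ideal of $\mathfrak{g}$, and in particular $\mathfrak{g}'$, is of the form $\mathfrak{z}'\oplus\bigoplus_{i\in S}\mathfrak{g}_i$ for some subspace $\mathfrak{z}'\subseteq\mathfrak{z}$ and some subset $S\subseteq\{1,\dots,r\}$. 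The goal is then to rule out any simple factor and to cut the central contribution down to dimension one.

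The key structural observation is that $[\mathfrak{h},v]=0$ forces $\mathfrak{h}$ to be an ideal of $\mathfrak{k}=\mathfrak{h}\oplus\mathbb{R}v$ with abelian quotient $\mathfrak{k}/\mathfrak{h}\cong\mathbb{R}$. Hence the projection $p:\mathfrak{k}\rightarrow\mathfrak{k}/\mathfrak{h}$ is a Lie algebra homomorphism onto an abelian algebra whose kernel is exactly $\mathfrak{h}$. First I would use this to show that $\mathfrak{g}'$ contains no simple factor: if some $\mathfrak{g}_i\subseteq\mathfrak{g}'\subseteq\mathfrak{k}$, then the restriction of $p$ to $\mathfrak{g}_i$ is a homomorphism into an abelian algebra, which must annihilate the perfect algebra $\mathfrak{g}_i=[\mathfrak{g}_i,\mathfrak{g}_i]$. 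Thus $\mathfrak{g}_i\subseteq\ker p=\mathfrak{h}$, exhibiting a nontrivial ideal of $\mathfrak{g}$ inside $\mathfrak{h}$, contrary to the hypothesis on $\mathfrak{h}$.

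It then remains to treat the case where $\mathfrak{g}'=\mathfrak{z}'$ is central in $\mathfrak{g}$. Here any subspace of $\mathfrak{g}'$ is automatically an ideal of $\mathfrak{g}$, since $\mathfrak{g}'$ is annihilated by $\mathrm{ad}(\mathfrak{g})$. In particular $\mathfrak{g}'\cap\mathfrak{h}$ is an ideal of $\mathfrak{g}$ contained in $\mathfrak{h}$, so the hypothesis on $\mathfrak{h}$ forces $\mathfrak{g}'\cap\mathfrak{h}=0$. Because $\mathfrak{g}'\subseteq\mathfrak{k}$ and $\mathfrak{k}/\mathfrak{h}$ is one-dimensional, this gives $\dim\mathfrak{g}'\leq 1$; as $\mathfrak{g}'$ is nontrivial we conclude $\dim\mathfrak{g}'=1$.

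The only delicate point, and the step I expect to require the most care, is the passage through the homomorphism $p$: one must verify that $[\mathfrak{h},v]=0$ indeed makes $\mathfrak{k}/\mathfrak{h}$ abelian, so that the perfectness of a simple factor can be leveraged to push it into $\mathfrak{h}$. Everything else reduces to the elementary ideal structure of compact reductive Lie algebras together with a dimension count coming from $\dim(\mathfrak{k}/\mathfrak{h})=1$.
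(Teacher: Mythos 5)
Your proof is correct and follows essentially the same route as the paper's: both rule out any semisimple (simple) component of $\mathfrak{g}'$ by noting that perfectness together with the abelian quotient $\mathfrak{k}/\mathfrak{h}$ would force it into $\mathfrak{h}$, and both dispose of the remaining abelian (central) part by the codimension-one count giving $\mathfrak{g}'\cap\mathfrak{h}=0$, hence $\dim\mathfrak{g}'\leq 1$. Your write-up merely makes explicit what the paper's terse proof leaves implicit, and your one flagged worry is harmless: $[\mathfrak{h},v]=0$ is available from Section 3, and in any case a bi-invariant inner product on the compact algebra $\mathfrak{g}$ shows directly that any codimension-one subalgebra of $\mathfrak{k}$ is an ideal with abelian quotient.
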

\begin{proof}
Let $\mathfrak{g}=\mathfrak{g}_0\oplus\mathfrak{g}'$ and
$\mathfrak{k}=\mathfrak{k}_0\oplus\mathfrak{g}'$ be the direct sum decomposition with respect to a bi-invariant inner product on $\mathfrak{g}$. Then obviously $\mathfrak{k}_0\in\mathfrak{g}_0$.
If $\mathfrak{g}'$ contains any semi-simple component $\mathfrak{g}''$,
then $\mathfrak{g}''$ is an ideal of $\mathfrak{g}$ contained in $\mathfrak{h}$, which is a contradiction. So $\mathfrak{g}'$ is Abelian.
If $\dim\mathfrak{g}'>1$, then $\mathfrak{g}$ and $\mathfrak{h}$ has some
nontrivial Abelian factor, which is also a contradiction.
\end{proof}

Up to equivalence, we list all possibilities as follows. In the following, we will first present the structure in the Lie algebra level, and then give the explicit coset spaces up to equivalence.
\begin{description}
\item{\rm (1)} $\mathfrak{g}=\mathfrak{su}(n+1)$,
$\mathfrak{k}=\mathfrak{su}(n)\oplus\mathbb{R}$, and $\mathfrak{h}=\mathfrak{su}(n)$. The coset space $G/H$ is equivalent to $S^{2n+1}=\mathrm{SU}(n+1)/\mathrm{SU}(n)$, where $n\geq 1$.
\item{\rm (2)}  $\mathfrak{g}=\mathfrak{su}(n+1)\oplus\mathbb{R}$,
$\mathfrak{k}=\mathfrak{su}(n)\oplus\mathbb{R}\oplus\mathbb{R}$, and
$\mathfrak{h}=\mathfrak{su}(n)\oplus\mathbb{R}$. The coset  $G/H$ is  equivalent  to one of the coset spaces $S^{2n+1}=\mathrm{SU}(n+1)/\mathrm{SU}(n)$,  $S^{2n+1}
    =\mathrm{U}(n+1)/\mathrm{U}(n)$, for $n\geq 1$. Note that algebraically,  in this case $G/H$ can also be     covered by
$(\mathrm{SU}(n+1)/S(\mathrm{U}(n)\times\mathrm{U}(1)))\times\mathbb{R}$. However, since $(\mathrm{SU}(n+1)/S(\mathrm{U}(n)\times\mathrm{U}(1)))\times\mathbb{R}$  can not be
endowed with an invariant Riemannian metric with positive curvature, these coset spaces must be excluded.
\item{\rm (3)} $\mathfrak{g}=\mathfrak{sp}(n+1)$,
$\mathfrak{k}=\mathfrak{sp}(n)\oplus\mathbb{R}$, and
$\mathfrak{h}=\mathfrak{sp}(n)$. The coset space
$G/H$ is equivalent to $S^{4n+3}=\mathrm{Sp}(n+1)/\mathrm{Sp}(n)$.
\item{\rm (4)} $\mathfrak{g}=\mathfrak{sp}(n+1)\oplus\mathbb{R}$,
$\mathfrak{k}=\mathfrak{sp}(n)\oplus\mathbb{R}\oplus\mathbb{R}$, and
$\mathfrak{h}=\mathfrak{sp}(n)\oplus\mathbb{R}$. The coset space $G/H$ is equivalent to
$S^{4n+3}=\mathrm{Sp}(n+1)/\mathrm{Sp}(n)$ or
$S^{4n+3}=(\mathrm{Sp}(n+1)\mathrm{U}(1))/(\mathrm{Sp}(n)\times\mathrm{U}(1))$. Similarly as in case (2), algebraically in the case the coset space $G/H$ can also be
 covered by $(\mathrm{Sp}(n+1)/(\mathrm{Sp}(n)\mathrm{U}(1)))\times\mathbb{R}$. But these spaces cannot be positively curved.
\item{\rm (5)}  $\mathfrak{g}=\mathfrak{sp}(n+1)\oplus\mathbb{R}$,
$\mathfrak{k}=\mathfrak{sp}(n)\oplus\mathfrak{sp}(1)\oplus\mathbb{R}$,
and $\mathfrak{h}=\mathfrak{sp}(n)\oplus\mathfrak{sp}(1)$. In this case, the universal covering space of
$G/H$ is  $\mathbb{H}\mathrm{P}^{2n+1}\times\mathbb{R}$,
hence $G/H$ can not be positively curved;
\item{\rm (6)} $\mathfrak{g}=\mathrm{su}(3)$, $\mathfrak{k}=\mathbb{R}\oplus
\mathbb{R}$  is a Cartan subalgebra of $\mathfrak{g}$, and $\mathfrak{h}=\mathbb{R}$.
The coset space $G/H$ is equivalent to $\mathrm{U}(3)/\mathrm{T}^1$, in which $T^1$ is a one-dimensional
torus in $\mathrm{U}(3)$. In the case that $T^1$ is a maximal torus of a standard $\mathrm{SU}(2)$
in $\mathrm{SU}(3)$, up to Weyl group actions, the space can be denoted as
$\mathrm{SU}(3)/\mathrm{U}(1)$. In other cases, it will be denoted as $S_{k,l}$ with
$kl(k+l)\neq 0$, where $T^1$ is generated by $\mathrm{diag}(e^{\sqrt{-1}kt},e^{\sqrt{-1}lt},e^{-\sqrt{-1}(k+l)t})$. The spaces $S_{k,l}$  are called
the Aloff-Wallach spaces in the literature (see \cite{AloffWallach1975}).
\item{\rm (7)} $\mathfrak{g}=\mathfrak{su}(3)\oplus\mathbb{R}$,
$\mathfrak{k}=\mathbb{R}\oplus\mathbb{R}\oplus\mathbb{R}$ and  $\mathfrak{h}=\mathbb{R}\oplus\mathbb{R}$.
The coset space $G/H$ is equivalent to $\mathrm{U}(3)/{T^2}$, in which $T^2$ is a two dimensional torus in the standard maximal torus of $\mathrm{U}(3)$ consisting of diagonal matrices. Note that if $T^2$ is
contained in $\mathrm{SU}(3)$, then $G/H$ is covered by $(\mathrm{SU}(3)/T^2)\times\mathbb{R}$,
which can not be positively curved.

\item{\rm (8)}  $\mathfrak{g}=\mathfrak{sp}(3)\oplus\mathbb{R}$,
$\mathfrak{k}=\mathfrak{sp}(1)\times\mathfrak{sp}(1)\times\mathfrak{sp}(1)
\oplus\mathbb{R}$,
and $\mathfrak{h}=\mathfrak{sp}(1)\times\mathfrak{sp}(1)\times\mathfrak{sp}(1)$;
in this case $G/H$ is covered by
$(\mathrm{Sp}(3)/(\mathrm{Sp}(1)\times\mathrm{Sp}(1)\times\mathrm{Sp}(1)))\times
\mathbb{R}$, so it does not admit invariant metrics of positive flag curvature;
\item{\rm (9)}  $\mathfrak{g}=\mathfrak{f}_4\oplus\mathbb{R}$,
$\mathfrak{k}=\mathfrak{so}(9)\oplus\mathbb{R}$,
$\mathfrak{h}=\mathfrak{so}(9)$. In this case the coset space $G/H$ is covered by
$(\mathrm{F}_4/\mathrm{Spin}(9))\times \mathbb{R}$, so it does not admit invariant metrics of positive flag curvature;
\item{\rm (10)} $\mathfrak{g}=\mathfrak{f}_4\oplus\mathbb{R}$,
$\mathfrak{k}=\mathfrak{so}(8)\oplus\mathbb{R}$,
$\mathfrak{h}=\mathfrak{so}(8)$. In this case the coset space $G/H$ is covered by
$(\mathrm{F}_4/\mathrm{Spin}(8))\times \mathbb{R}$, so it does not admit invariant metrics of  positive flag curvature.
\end{description}

\section{Further discussion on the list}

In the above section, we have determined all the possible coset spaces $G/H$ which may be endowed with invariant $(\alpha,\beta)$-metrics with positive flag curvature and vanishing S-curvature. It is easily seen that each of the spaces admits an invariant Riemannian metric with positive sectional curvature. To complete the proof of the main theorem, we need to determine which coset spaces admit invariant non-Riemannian ones with positive flag curvature and vanishing S-curvature. In this section we will completely settle this problem. In particular, we prove that if a coset space in the above list admits an invariant Riemannian metric with positive sectional curvature, then there exists a nonzero Killing vector field of constant length with respect to such a Riemannian metric.

First we consider the homogeneous spheres $S^{2n+1}=\mathrm{SU}(n+1)/\mathrm{SU}(n)$, $S^{2n+1}
    =\mathrm{U}(n+1)/\mathrm{U}(n)$, $S^{4n+3}=\mathrm{Sp}(n+1)/\mathrm{Sp}(n)$, $S^{4n+3}=(\mathrm{Sp}(n+1)\mathrm{U}(1))/(\mathrm{Sp}(n)\times\mathrm{U}(1))$. On these spaces, there exists invariant normal Riemannian metrics with positive curvature (see \cite{Ber}) and there exists a nonzero invariant vector field
    which are also Kiliing vector fields with respect to the Riemannian metric. Therefore, there exists an invariant non-Riemannian $(\alpha,\beta)$-metric with positive flag curvature and vanishing S-curvature. In fact, in \cite{HD}, it is proved that on each of the above coset spaces there exists invariant non-Riemannian Randers metrics with positive flag curvature and vanishing S-curvature. The same assertion also holds for the coset spaces $S_{k,l}$ with $kl(k+l)\neq 0$; see \cite{HD}.
     Therefore we only need to consider
 the homogeneous spaces $\mathrm{SU}(3)/\mathrm{U}(1)$ in (6) and $\mathrm{U}(3)/T^2$
in (7).

 Consider $M=G/H=\mathrm{SU}(3)/\mathrm{U}(1)$. The isotropy group
is given by diagonal matrices of the form $\mathrm{diag}(e^{\sqrt{-1}t},e^{-\sqrt{-1}t},1)$, $\forall t\in\mathbb{R}$.
We now show that a $G$-invariant non-Riemannian
  $(\alpha,\beta)$-metric $F$ on $G/H$ can not be positively curved.
Fix a bi-invariant inner product on $\mathfrak{g}$, and  let $\mathfrak{m}$  be  the orthogonal complement of $\mathfrak{h}$.  Then $\mathfrak{m}$ must be the direct sum of the subspace
$\mathfrak{m}_0=\mathbb{R}\sqrt{-1}\mathrm{diag}(1,1,-2)$, which must contain the $\alpha$-dual $v$ of $\beta$ in $\mathfrak{m}$, and three
root planes, $\mathfrak{m}_1=\mathfrak{g}_{\pm(e_1-e_2)}$,
$\mathfrak{m}_2=\mathfrak{g}_{\pm(e_1-e_3)}$ and
$\mathfrak{m}_4=\mathfrak{g}_{\pm(e_2-e_3)}$, where
$e_i=\sqrt{-1}E_{i,i}$. We denote  the projection from
$\mathfrak{m}$ onto  $\mathfrak{m}_i$  (with respect to the above decomposition) as $\mathrm{pr}_i$.
Notice that the $\alpha$-dual of the one-form $\beta$ defines
a nonzero Killing vector field $V$ for $(M,F)$, of constant length. Its integral curve at each point is a geodesic, i.e., $V$ is also a geodesic
field. The localization of $F$ at $V$ defines a $\mathrm{SU}(3)$-homogeneous Riemannian metric $g_V$ on $M$.

Since both $F$ and $V$ are $\mathrm{Ad(U}(1))$-invariant, so does
$g_V$. Therefore we have  $g_V^2=g_0^2+g_1^2+g_2^2+g_3^2$, where $g_i$ defines an inner product on $\mathfrak{m}_i$ such that up to scalars $g_i^2$ coincides with the restriction to $\mathfrak{m}_i$ of the bi-invariant inner product on $\mathfrak{g}=\mathfrak{su}(3)$, for $i\ne 0$. If we denote the inner product
on $\mathfrak{m}$ defined by $g_V$ as $\langle\cdot,\cdot\rangle_{g_V}$, then
$\mathfrak{m}_i$, $\forall 0\leq i\leq 3$ are orthogonal with respect to
all inner products defined by any bi-invariant Riemannian metric on $\mathrm{SU}(3)$, by $\alpha$ and by $g_V$.

Now for any nonzero $v_1\in \mathfrak{m}_0$ and any nonzero $v_1'\in\mathfrak{m}_1$, $(v_1, v_1')$ form a linearly independent commuting pair. By Lemma 3.2 of \cite{Wa}, if
$v_1$ and $v_1'$ are viewed as  tangent vectors in $T_{eH}M$, then we have
$$\langle R^{g_V}(v_1,v_1')v_1',v_1\rangle_{g_V}=
\langle U(v_1,v_1'),U(v_1,v_1')\rangle_{g_V}
-\langle U(v_1,v_1),U(v_1',v_1')\rangle_{g_V},$$
where $U(\cdot,\cdot)\in\mathfrak{m}$ is defined by
$$\langle U(u_1,u_2),u_3\rangle_{g_V}=\frac{1}{2}
(\langle[u_3,u_1]_{\mathfrak{m}},u_2\rangle_{g_V}+
\langle [u_3,u_2]_{\mathfrak{m}},u_1\rangle_{g_V}),$$
for any $u_1$, $u_2$ and $u_3$ in $\mathfrak{m}$.
Now a direct calculation shows that $U(v_1,v_1)=U(v_1,v_1')=0$.
So the sectional curvature of $g_V$ for the tangent plane generated by
$v_1$ and $v_1'$ at $eH$ vanishes. Now applying Shen's Theorem, we have
$$K^F(eH,v,v\wedge v')=0.$$
Thus the coset space $\mathrm{SU}(3)/\mathrm{U}(1)$ in (6), where $\mathrm{U}(1)$ is a
maximal torus in a standard $\mathrm{SU}(2)\subset\mathrm{SU}(3)$,
cannot be endowed with any invariant non-Riemannian $(\alpha,\beta)$ metric with positive flag curvature.

Now consider the coset space $M=\mathrm{U}(3)/T^2$, where $T^2$ is not contained in the standard subgroup $\mathrm{SU}(3)$.
As in the above case,  fix a bi-invariant inner product of $\mathfrak{u}(3)$, and let $\mathfrak{m}$
 be the orthogonal complement of $\mathfrak{h}$. Then $\mathfrak{m}$   is the direct sum of root planes,
and a one-dimensional space generated by a diagonal matrix. If the intersection between
$\mathrm{Lie}(T^2)$ and $\mathfrak{su}(3)$ is one of
$$\mathbb{R}\sqrt{-1}\mathrm{diag}(1,-1,0),\,\,\mathbb{R}\sqrt{-1}\mathrm{diag}(1,0,-1),$$
 or $$\mathbb{R}\sqrt{-1}\mathrm{diag}(0,1,-1),$$
  then
a nonzero matrix of the form
$$\sqrt{-1}\mathrm{diag}(a,a,b),\,\,\sqrt{-1}\mathrm{diag}(a,b,a),$$
 or $$\sqrt{-1}\mathrm{diag}(b,a,a),$$
 where $a\ne b$, is contained in $\mathfrak{m}$. This matrix  generates a line which must contain
the $\alpha$-dual $v$ of $\beta$ in $\mathfrak{m}$. Without losing generality, we can assume that
$v=\sqrt{-1}\mathrm{diag}(a,a,b)$. Then we can select a nonzero vector
$v'\in\mathfrak{g}_{\pm(e_1-e_2)}$ and similarly prove that the flag curvature
$K^F(eH,v,v\wedge v')$ vanishes. Therefore,  in this case $(M,F)$ cannot be endowed with an invariant non-Riemannian $(\alpha,\beta)$-metric with positive flag curvature.

In other cases, $G/H=\mathrm{U}(3)/T^2$ is diffeomorphic to a coset space of the form
$G'/H'=S_{k,l}=\mathrm{SU}(3)/T^1$, where $T^1$ is equal to the intersection of $T^2$
with $\mathrm{SU}(3)$. Upon the $\mathrm{Ad}(G')$-action, we can assume that $T^2$ is
contained in the standard Cartan subalgebra $\mathfrak{t}$ corresponding to
the maximal torus $T$ of $\mathrm{U}(3)$ consisting of all diagonal matrices, and $T^1=U_{k,l}$. Then it is proved in  \cite{AloffWallach1975} that there exists  a positively curved $G'$-invariant Riemannian metric $\alpha$ on $G'/H'$ (so that it defines an inner product on $\mathfrak{m}'$), such that with respect to $\alpha$,
$\mathfrak{m}'$ can be orthogonally decomposed as
$\mathfrak{m}'=\mathfrak{m}'_0+\mathfrak{g}_{\pm(e_1-e_2)}+
\mathfrak{g}_{\pm(e_1-e_3)}+\mathfrak{g}_{\pm(e_2-e_3)}$, where
$\mathfrak{m}'_0$ is the orthogonal complement of $\mathrm{Lie}(T^1)$
in $\mathfrak{t}\cap\mathfrak{su}(3)$ with respect to the bi-invariant
inner product, and on each root plane, $\alpha$
differs
with the bi-invariant inner product only by a scalar multiplication.
The canonical isomorphism between $\mathfrak{m}$ and
$\mathfrak{m}'$, both viewed  as the same tangent space at $x=eH=eH'$,
induces the identity map on each root plane, and sends $\mathfrak{m}'_0$
onto $\mathfrak{m}_0$ which is also contained in $\mathfrak{t}$. By this
isomorphism, $\alpha$ defines a $\mathrm{Ad}(T)$-invariant (which is $T^2$-invariant as well) inner product on $\mathfrak{m}$. Therefore the Riemannian
metric $\alpha$ is also a $G$-invariant metric on $G/H$. Since on $G/H$ there exists a
$G$-invariant nonzero Killing vector field of constant length, this Riemannian metric can be slightly perturbed to $G$-invariant
 non-Riemannian  $(\alpha,\beta)$-metric on
$M$ with positive flag curvature and vanishing S-curvature. In fact, let $\beta$ be the $\alpha$-dual of an invariant Killing vector field of constant length on the Riemannian manifold $(G/H, \alpha)$. Then for any sufficiently small positive real number $t$, $F_t=\alpha+t\beta$ is a non-Riemannian Randers metric with vanishing S-curvature (see \cite{De}). Since $\alpha$ has positive sectional curvature, by the continuity, the Randers metric $F_t$ also has positive flag curvature for sufficiently small $t$.

The proof of Theorem \ref{main1} is now completed.

\end{document}